\newtheorem{theorem}{Theorem}[section]
\newtheorem{lemma}{Lemma}[section]
\newtheorem{remark}{Remark}[section]
\newtheorem{corollary}{Corollary}[section]
\numberwithin{equation}{section}
\begin{document}
\title{Around Jensen's inequality for strongly convex functions}
\author{Hamid Reza Moradi$^1$, Mohsen Erfanian Omidvar$^2$, Muhammad Adil Khan$^3$ and Kazimierz Nikodem$^4$}
\subjclass[2010]{Primary 47A63, 26B25. Secondary 46L05.}
\keywords{Strongly convex functions, operator inequality, Jensen's inequality.} \maketitle
\begin{abstract}
In this paper we use basic properties of strongly convex functions to obtain new inequalities including Jensen type and Jensen-Mercer type inequalities. Applications for special means are pointed out as well. We also give a Jensen's operator inequality for strongly convex functions. As a corollary, we improve the H\"older-McCarthy inequality under suitable conditions. More precisely we show that if $Sp\left( A \right)\subset \left( 1,\infty  \right)$, then
\[{{\left\langle Ax,x \right\rangle }^{r}}\le \left\langle {{A}^{r}}x,x \right\rangle -\frac{{{r}^{2}}-r}{2}\left( \left\langle {{A}^{2}}x,x \right\rangle -{{\left\langle Ax,x \right\rangle }^{2}} \right),\quad r\ge 2\]
and if $Sp\left( A \right)\subset \left( 0,1  \right)$, then
\[\left\langle {{A}^{r}}x,x \right\rangle \le {{\left\langle Ax,x \right\rangle }^{r}}+\frac{r-{{r}^{2}}}{2}\left( {{\left\langle Ax,x \right\rangle }^{2}}-\left\langle {{A}^{2}}x,x \right\rangle  \right),\quad 0<r<1\]
for each positive operator $A$ and $x\in \mathcal{H}$ with $\left\| x \right\|=1$.
\end{abstract}
\pagestyle{myheadings}
\markboth{\centerline {Around Jensen's inequality for strongly convex functions}}
{\centerline {H.R. Moradi, M.E. Omidvar, M. Adil Khan \& K. Nikodem}}
\bigskip
\bigskip
\section{\bf Introduction and Preliminaries}
\vskip 0.4 true cm
Let $I\subset \mathbb{R}$ be an interval and $c$ be a positive number. Following Polyak \cite{6}, a function $f:I\to \mathbb{R}$ is called {\it strongly convex with modulus $c$} if
\begin{equation}\label{8}
f\left( \lambda x+\left( 1-\lambda \right)y \right)\le \lambda f\left( x \right)+\left( 1-\lambda \right)f\left( y \right)-c\lambda \left( 1-\lambda \right){{\left( x-y \right)}^{2}},
\end{equation}
for all $x,y\in I$ and $\lambda \in \left[ 0,1 \right]$. Obviously, every strongly convex function is convex. Observe also that, for instance, affine functions are not strongly convex. Since strong convexity is strengthening the notion of convexity, some properties of strongly convex functions are just “stronger versions” of known properties of convex functions. For instance, a
function $f:I\to \mathbb{R}$ is strongly convex with modulus $c$ if and only if for every ${{x}_{0}}\in \overset{o}{\mathop{I}}\,$ (the interior of $I$) there exists a number $l\in \mathbb{R}$ such that
\begin{equation}\label{41}
c{{\left( x-{{x}_{0}} \right)}^{2}}+l\left( x-{{x}_{0}} \right)+f\left( {{x}_{0}} \right)\le f\left( x \right),\quad x\in I.
\end{equation}
In other words, $f$ has a quadratic support at ${{x}_{0}}$. For differentiable functions $f$, $f$ is strongly convex with modulus $c$ if and only if
\begin{equation}\label{54}
\left( f'\left( x \right)-f'\left( y \right) \right)\left( x-y \right)\ge 2c{{\left( x-y \right)}^{2}},
\end{equation}
for each $x,y\in I$.  We recommend the book \cite{13} and the articles \cite{10,54} for more details on strongly convex functions.

A basic result concerning the convex functions is Jensen's inequality. Its formal statement is as follows: If $f$ is a convex function on an interval $\left[ m,M \right]$, then
\begin{equation*}\label{13}
f\left( \sum\limits_{i=1}^{n}{{{p}_{i}}{{x}_{i}}} \right)\le \sum\limits_{i=1}^{n}{{{p}_{i}}f\left( {{x}_{i}} \right)},
\end{equation*}
for all ${{x}_{i}}\in \left[ m,M \right]$ and all ${{p}_{i}}\in \left[ 0,1 \right]$ $\left( i=1,\ldots ,n \right)$ with $\sum\limits_{i=1}^{n}{{{p}_{i}}}=1$.

There are several inequalities which are special kinds of this inequality. So, many mathematicians paid their attention to get generalizations and reformulations of this inequality.

Consider a real valued function $f$ defined on an interval $I$, ${{x}_{1}},\ldots ,{{x}_{n}}\in I$ and ${{p}_{1}},\ldots ,{{p}_{n}}\in \left[ 0,1 \right]$ with $\sum\limits_{i=1}^{n}{{{p}_{i}}}=1$. The {\it Jensen functional} is defined by
\[{{\mathcal{J}}_{n}}\left( f,\mathbf{x},\mathbf{p} \right)=\sum\limits_{i=1}^{n}{{{p}_{i}}f\left( {{x}_{i}} \right)}-f\left( \sum\limits_{i=1}^{n}{{{p}_{i}}{{x}_{i}}} \right).\]
According to {{\cite[Theorem 1]{5}}}, if $\mathbf{x}=\left( {{x}_{1}},\ldots ,{{x}_{n}} \right)\in {{I}^{n}},\text{ }\mathbf{p}=\left( {{p}_{1}},\ldots ,{{p}_{n}} \right),\text{ }\mathbf{q}=\left( {{q}_{1}},\ldots ,{{q}_{n}} \right)$ are non-negative $n$-tuples satisfying $\sum\limits_{i=1}^{n}{{{p}_{i}}}=1,\text{ }\sum\limits_{i=1}^{n}{{{q}_{i}}}=1,\text{ }{{q}_{i}}>0,\text{ }i=1,\ldots ,n$, then
\[\underset{1\le i\le n}{\mathop{\min }}\,\left\{ \frac{{{p}_{i}}}{{{q}_{i}}} \right\}{{\mathcal{J}}_{n}}\left( f,\mathbf{x},\mathbf{q} \right)\le {{\mathcal{J}}_{n}}\left( f,\mathbf{x},\mathbf{p} \right)\le \underset{1\le i\le n}{\mathop{\max }}\,\left\{ \frac{{{p}_{i}}}{{{q}_{i}}} \right\}{{\mathcal{J}}_{n}}\left( f,\mathbf{x},\mathbf{q} \right).\]
For more results concerning Jensen's functional the reader is referred to \cite{8,9}.

In paper {{\cite[Theorem 1.2]{2}}} Mercer proved the following variant of Jensen's inequality, to which we will refer as to the {\it Jensen-Mercer's inequality}. If $f$ is a convex function on $\left[ m,M \right]$, then
\begin{equation}\label{12}
f\left( M+m-\sum\limits_{i=1}^{n}{{{p}_{i}}{{x}_{i}}} \right)\le f\left( M \right)+f\left( m \right)-\sum\limits_{i=1}^{n}{{{p}_{i}}f\left( {{x}_{i}} \right)},
\end{equation}
for all ${{x}_{i}}\in \left[ m,M \right]$ and all ${{p}_{i}}\in \left[ 0,1 \right]$ $\left( i=1,\ldots ,n \right)$ with $\sum\limits_{i=1}^{n}{{{p}_{i}}}=1$. We refer the reader to \cite{12,11,14}
as a sample of the extensive use of this inequality in this field.
\vskip 0.3 true cm
The principal aim of this article is to derive some results related to the Jensen functional in the framework of strongly convex functions (see Theorem \ref{21}). We present Jensen-Mercer's inequality for this class of functions (see Theorem \ref{5}) and give some applications for means (Corollary \ref{2.2}). In particular some new refinements of Jensen's operator inequality for strongly convex functions are also given (Theorem \ref{53} and Theorem \ref{32}).
\section{\bf On The Jensen Inequality For Strongly Convex Functions}
\vskip 0.4 true cm
The following lemma due to Merentes and Nikodem {{\cite[Theorem 4]{1}}} is the starting point for our discussion.
\begin{lemma}\label{2}
If $f:I\to \mathbb{R}$ is strongly convex with modulus $c$, then
\begin{equation}\label{20}
c\sum\limits_{i=1}^{n}{{{p}_{i}}{{\left( {{x}_{i}}-\overline{x} \right)}^{2}}}\le {{\mathcal{J}}_{n}}\left( f,\mathbf{x},\mathbf{p} \right),
\end{equation}
for all ${{x}_{1}},\ldots ,{{x}_{n}}\in I$, ${{p}_{1}},\ldots ,{{p}_{n}}>0$ with ${{p}_{1}}+\ldots +{{p}_{n}}=1$ and $\overline{x}=\sum\limits_{i=1}^{n}{{{p}_{i}}{{x}_{i}}}$.
\end{lemma}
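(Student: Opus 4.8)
The plan is to deduce the inequality directly from the quadratic support property \eqref{41}, exactly as the classical Jensen inequality follows from the existence of a supporting line. The only structural input beyond \eqref{41} is that the barycenter $\overline{x}=\sum_{i=1}^{n}p_i x_i$ is an admissible center for a quadratic support.

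First I would apply \eqref{41} at the point $x_0=\overline{x}$, obtaining a constant $l\in\mathbb{R}$ with
\[
c{\left( x-\overline{x} \right)}^{2}+l\left( x-\overline{x} \right)+f\left( \overline{x} \right)\le f\left( x \right),\qquad x\in I.
\]
Next I would substitute $x=x_i$, multiply the $i$-th inequality by $p_i>0$, and sum over $i$. On the left this produces three pieces: the quadratic term $c\sum_{i=1}^{n}p_i{\left( x_i-\overline{x} \right)}^{2}$, the linear term $l\sum_{i=1}^{n}p_i\left( x_i-\overline{x} \right)$, and the constant term $f\left( \overline{x} \right)\sum_{i=1}^{n}p_i$. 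The linear term vanishes because $\sum_{i=1}^{n}p_i\left( x_i-\overline{x} \right)=\overline{x}-\overline{x}=0$, and the constant term collapses to $f\left( \overline{x} \right)$ since $\sum_{i=1}^{n}p_i=1$. Rearranging the surviving relation
\[
c\sum_{i=1}^{n}p_i{\left( x_i-\overline{x} \right)}^{2}+f\left( \overline{x} \right)\le \sum_{i=1}^{n}p_i f\left( x_i \right)
\]
yields precisely \eqref{20}.

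The one point that needs care --- and the main (mild) obstacle --- is that \eqref{41} is guaranteed only for centers $x_0$ in the interior of $I$. Since the weights are strictly positive and each $x_i\in I$, the barycenter $\overline{x}$ lies in $I$, and it can fail to be interior only when it equals an endpoint of $I$; but then strict positivity of the $p_i$ forces every $x_i=\overline{x}$, so both sides of \eqref{20} vanish and the inequality holds trivially. Thus one may assume $\overline{x}$ is interior and invoke \eqref{41}. I would prefer this support-function argument over an induction on $n$ built from the two-point definition \eqref{8}: although the base case $n=2$ reduces immediately to \eqref{8} with $\lambda=p_1$ (together with the identity $p_1 p_2{\left( x_1-x_2 \right)}^{2}=\sum_{i=1}^{2}p_i{\left( x_i-\overline{x} \right)}^{2}$), the inductive step is awkward because the variance functional $\sum_i p_i{\left( x_i-\overline{x} \right)}^{2}$ does not split cleanly when one merges indices, whereas the quadratic-support route handles all $n$ uniformly in a single line.
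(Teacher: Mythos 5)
Your argument is correct: applying the quadratic support inequality \eqref{41} at $x_{0}=\overline{x}$, weighting by $p_{i}$ and summing kills the linear term because $\sum_{i=1}^{n}p_{i}\left( x_{i}-\overline{x} \right)=0$, and what survives is exactly \eqref{20}; your treatment of the boundary case (if $\overline{x}$ is an endpoint of $I$, positivity of the $p_{i}$ forces all $x_{i}=\overline{x}$ and both sides vanish) closes the only gap in invoking \eqref{41}. Note, however, that the paper does not prove this lemma at all --- it imports it verbatim from Merentes and Nikodem \cite{1}, so there is no in-paper proof to match. The proof in that source (and the one most natural given Remark \ref{36} of this paper) runs differently: since $g\left( x \right)=f\left( x \right)-c{{x}^{2}}$ is an ordinary convex function, the classical Jensen inequality applied to $g$ gives $\sum_{i=1}^{n}p_{i}f\left( x_{i} \right)-f\left( \overline{x} \right)\ge c\left( \sum_{i=1}^{n}p_{i}x_{i}^{2}-{\overline{x}}^{2} \right)$, and the variance identity $\sum_{i=1}^{n}p_{i}x_{i}^{2}-{\overline{x}}^{2}=\sum_{i=1}^{n}p_{i}{\left( x_{i}-\overline{x} \right)}^{2}$ finishes. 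That route is shorter because it outsources everything to classical Jensen and needs no interiority discussion; your support-function route is more self-contained and generalizes directly to the inner-product-space setting mentioned in the paper's remark after Theorem \ref{21}, where the quadratic support characterization still holds. Either way, your proposal is a complete and valid proof.
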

\vskip 0.3 true cm
In {{\cite[Corollary 3]{editor}}} the following result has been given:
\begin{theorem}\label{21}
Let $f$ be a strongly convex function with modulus $c$, $\mathbf{x}=\left( {{x}_{1}},\ldots ,{{x}_{n}} \right)\in {{I}^{n}},\text{ }\mathbf{p}=\left( {{p}_{1}},\ldots ,{{p}_{n}} \right),\text{ }\mathbf{q}=\left( {{q}_{1}},\ldots ,{{q}_{n}} \right)$ non-negative $n$-tuples satisfying $\sum\limits_{i=1}^{n}{{{p}_{i}}}=1,\text{ }\sum\limits_{i=1}^{n}{{{q}_{i}}}=1,\text{ }{{q}_{i}}>0,\text{ }i=1,\ldots ,n$. Then
\[\begin{aligned}
& m{{\mathcal{J}}_{n}}\left( f,\mathbf{x},\mathbf{q} \right)+c\left( \sum\limits_{i=1}^{n}{\left( {{p}_{i}}-m{{q}_{i}} \right){{\left( {{x}_{i}}-\sum\limits_{i=1}^{n}{{{p}_{i}}{{x}_{i}}} \right)}^{2}}}+m{{\left( \sum\limits_{i=1}^{n}{\left( {{p}_{i}}-{{q}_{i}} \right){{x}_{i}}} \right)}^{2}} \right) \\
& \le {{\mathcal{J}}_{n}}\left( f,\mathbf{x},\mathbf{p} \right) \\
& \le M{{\mathcal{J}}_{n}}\left( f,\mathbf{x},\mathbf{q} \right)-c\left( \sum\limits_{i=1}^{n}{\left( M{{q}_{i}}-{{p}_{i}} \right){{\left( {{x}_{i}}-\sum\limits_{j=1}^{n}{{{q}_{j}}{{x}_{j}}} \right)}^{2}}}+{{\left( \sum\limits_{i=1}^{n}{\left( {{p}_{i}}-{{q}_{i}} \right){{x}_{i}}} \right)}^{2}} \right), \\
\end{aligned}\]
where
$$m:=\underset{1\le i\le n}{\mathop{min}}\,\left\{ \frac{{{p}_{i}}}{{{q}_{i}}} \right\},\quad M:=\underset{1\le i\le n}{\mathop{\max }}\,\left\{ \frac{{{p}_{i}}}{{{q}_{i}}} \right\}.$$
\end{theorem}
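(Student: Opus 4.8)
The plan is to reduce the statement to the comparison inequality for the Jensen functional of an ordinary convex function, quoted in the Introduction as \cite[Theorem 1]{5}, by splitting a strongly convex function into a convex part and a quadratic one. First I would record the elementary but crucial fact that $f$ is strongly convex with modulus $c$ if and only if $g(t):=f(t)-ct^2$ is convex; this is immediate from \eqref{8} and the identity $c\left(\lambda x+(1-\lambda)y\right)^2=\lambda c x^2+(1-\lambda)c y^2-c\lambda(1-\lambda)(x-y)^2$. Thus I may write $f=g+c\,e_2$, where $e_2(t)=t^2$ and $g$ is convex.

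Next I would use that the map $f\mapsto\mathcal{J}_n(f,\mathbf{x},\mathbf{p})$ is linear, so that $\mathcal{J}_n(f,\mathbf{x},\mathbf{p})=\mathcal{J}_n(g,\mathbf{x},\mathbf{p})+c\,\mathcal{J}_n(e_2,\mathbf{x},\mathbf{p})$, and note that the quadratic contribution is exactly a weighted variance,
\[\mathcal{J}_n(e_2,\mathbf{x},\mathbf{p})=\sum_{i=1}^n p_i x_i^2-\Big(\sum_{i=1}^n p_i x_i\Big)^2=\sum_{i=1}^n p_i\left(x_i-\overline{x}\right)^2=:V_{\mathbf p},\]
and likewise $V_{\mathbf q}$ for the weights $\mathbf q$. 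Applying \cite[Theorem 1]{5} to the convex function $g$ (using $q_i>0$ and $\mathcal{J}_n(g,\mathbf{x},\mathbf{q})\ge0$) gives $m\,\mathcal{J}_n(g,\mathbf{x},\mathbf{q})\le\mathcal{J}_n(g,\mathbf{x},\mathbf{p})\le M\,\mathcal{J}_n(g,\mathbf{x},\mathbf{q})$. Substituting $\mathcal{J}_n(g,\mathbf{x},\mathbf{p})=\mathcal{J}_n(f,\mathbf{x},\mathbf{p})-cV_{\mathbf p}$ and the analogous identity for $\mathbf q$ into both halves yields
\[\mathcal{J}_n(f,\mathbf{x},\mathbf{p})\ge m\,\mathcal{J}_n(f,\mathbf{x},\mathbf{q})+c\left(V_{\mathbf p}-mV_{\mathbf q}\right),\qquad \mathcal{J}_n(f,\mathbf{x},\mathbf{p})\le M\,\mathcal{J}_n(f,\mathbf{x},\mathbf{q})-c\left(MV_{\mathbf q}-V_{\mathbf p}\right).\]

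It remains to identify the correction terms $V_{\mathbf p}-mV_{\mathbf q}$ and $MV_{\mathbf q}-V_{\mathbf p}$ with the explicit expressions in the statement, and this bookkeeping is where I expect the only genuine work to lie. The tool is the parallel-axis identity: writing $\overline{x}=\sum_i p_i x_i$ and $\sum_j q_j x_j$ for the two means, the vanishing of the cross term $\sum_i q_i\left(x_i-\sum_j q_j x_j\right)=0$ gives $\sum_i q_i\left(x_i-\overline{x}\right)^2=\sum_i q_i\left(x_i-\sum_j q_j x_j\right)^2+\left(\overline{x}-\sum_j q_j x_j\right)^2$, together with its analogue obtained by interchanging $\mathbf p$ and $\mathbf q$. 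Combining these with $\sum_i(p_i-q_i)x_i=\overline{x}-\sum_j q_j x_j$, a direct computation shows that $V_{\mathbf p}-mV_{\mathbf q}$ equals $\sum_i(p_i-mq_i)\left(x_i-\overline{x}\right)^2+m\left(\sum_i(p_i-q_i)x_i\right)^2$ and that $MV_{\mathbf q}-V_{\mathbf p}$ equals $\sum_i(Mq_i-p_i)\left(x_i-\sum_j q_j x_j\right)^2+\left(\sum_i(p_i-q_i)x_i\right)^2$, which are precisely the bracketed terms in the lower and upper bounds. The slight asymmetry between the two bounds, namely the factor $m$ in front of the last square on the left but not on the right, is accounted for by which of the two means is the natural centering in each half.
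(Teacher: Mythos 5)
Your proof is correct, and there is nothing in the paper to compare it against: the authors do not prove Theorem \ref{21} but import it verbatim from \cite[Corollary 3]{editor}. Your derivation is self-contained modulo two facts the paper itself already quotes, namely Dragomir's two-sided bound for the Jensen functional of a convex function (\cite[Theorem 1]{5}) and the characterization in Remark \ref{36} that $g(t)=f(t)-ct^{2}$ is convex; combined with the linearity of $f\mapsto\mathcal{J}_{n}(f,\mathbf{x},\mathbf{p})$ and the observation that $\mathcal{J}_{n}(e_{2},\mathbf{x},\mathbf{p})$ is the weighted variance $V_{\mathbf p}$, the argument reduces to identifying the correction terms, and that identification checks out. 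Writing $\overline{x}_{p}=\sum_{i}p_{i}x_{i}$ and $\overline{x}_{q}=\sum_{j}q_{j}x_{j}$, the parallel-axis identity gives $V_{\mathbf q}=\sum_{i}q_{i}\left(x_{i}-\overline{x}_{p}\right)^{2}-\left(\overline{x}_{p}-\overline{x}_{q}\right)^{2}$, hence
\[
V_{\mathbf p}-mV_{\mathbf q}=\sum_{i=1}^{n}\left(p_{i}-mq_{i}\right)\left(x_{i}-\overline{x}_{p}\right)^{2}+m\left(\overline{x}_{p}-\overline{x}_{q}\right)^{2},
\]
and symmetrically $V_{\mathbf p}=\sum_{i}p_{i}\left(x_{i}-\overline{x}_{q}\right)^{2}-\left(\overline{x}_{p}-\overline{x}_{q}\right)^{2}$ yields
\[
MV_{\mathbf q}-V_{\mathbf p}=\sum_{i=1}^{n}\left(Mq_{i}-p_{i}\right)\left(x_{i}-\overline{x}_{q}\right)^{2}+\left(\overline{x}_{p}-\overline{x}_{q}\right)^{2};
\]
since $\overline{x}_{p}-\overline{x}_{q}=\sum_{i}\left(p_{i}-q_{i}\right)x_{i}$, these are exactly the bracketed expressions in the statement, with the asymmetric factor $m$ (and the different centerings) arising precisely as you say. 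Your parenthetical appeal to $\mathcal{J}_{n}(g,\mathbf{x},\mathbf{q})\ge 0$ is not actually needed, since Dragomir's theorem already delivers both inequalities for the convex $g$; otherwise the write-up is complete and is almost certainly the argument used in the cited source.
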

\begin{remark}
Notice that Lemma \ref{2} also holds for functions defined on open convex subsets of an inner product space (cf. {{\cite[Theorem 2]{10}}}). Therefore, Theorem \ref{21} can be also formulated and proved in such more general settings.
\end{remark}
\vskip 0.3 true cm
An interesting corollary can be deduced at this stage. We restrict ourselves to the case when $n=2$.
\begin{remark}\label{36}
 According to Hiriart-Urruty and Lemar\'echal {{\cite[Prop 1.1.2]{51}}}, the function $f:I\to \mathbb{R}$ is strongly convex with modulus $c$ if and only if the function $g:I\to \mathbb{R}$ defined by $g\left( x \right)=f\left( x \right)-c{{x}^{2}}$ is convex.
Hence the function $f:\left( 0,1 \right]\to \left[ 0,\infty  \right),\text{ }f\left( x \right)=-\ln x$ is  strongly convex with modulus $c=\frac{1}{2}$. Taking ${{p}_{1}}=\lambda ,\text{ }{{p}_{2}}=1-\lambda ,\text{ }{{q}_{1}}=\mu ,\text{ }{{q}_{2}}=1-\mu $ with $\lambda ,\mu \in \left[ 0,1 \right],\text{ }{{x}_{1}}=a,\text{ }{{x}_{2}}=b$ and taking into account that $m=\min \left\{ \frac{\lambda }{\mu },\frac{1-\lambda }{1-\mu } \right\}$ and $M=\max \left\{ \frac{\lambda }{\mu },\frac{1-\lambda }{1-\mu } \right\}$, simple algebraic manipulations show that

\[\begin{aligned}
  & {{\left( \frac{\mu a+\left( 1-\mu  \right)b}{{{a}^{\mu }}{{b}^{1-\mu }}} \right)}^{m}} \\ 
 &\quad \times \exp \left( \frac{{{\left( b-a \right)}^{2}}}{2}\left( \left( \lambda -m\mu  \right){{\left( \lambda -1 \right)}^{2}}+{{\lambda }^{2}}\left( \left( 1-\lambda  \right)-m\left( 1-\mu  \right) \right)+m{{\left( \mu -\lambda  \right)}^{2}} \right) \right) \\ 
 & \le \frac{\lambda a+\left( 1-\lambda  \right)b}{{{a}^{\lambda }}{{b}^{1-\lambda }}} \\ 
 & \le {{\left( \frac{\mu a+\left( 1-\mu  \right)b}{{{a}^{\mu }}{{b}^{1-\mu }}} \right)}^{M}} \\ 
 & \quad \times \frac{1}{\exp \left( \frac{{{\left( b-a \right)}^{2}}}{2}\left( \left( M\mu -\lambda  \right){{\left( \mu -1 \right)}^{2}}+{{\mu }^{2}}\left( M\left( 1-\mu  \right)-\left( 1-\lambda  \right) \right)+{{\left( \mu -\lambda  \right)}^{2}} \right) \right)}. \\ 
\end{aligned}\]

\end{remark}
\vskip 0.3 true cm
Remark \ref{36} admits the following important special case.
\begin{corollary}\label{34}
Let $a,b\in \left( 0,1 \right]$, then
\begin{equation*}
\begin{aligned}
  & {{K}^{r}}\left( \frac{a}{b} \right) \\ 
 &\quad  \times \exp \left( \frac{{{\left( b-a \right)}^{2}}}{2}\left( \left( \lambda -r \right){{\left( \lambda -1 \right)}^{2}}+{{\lambda }^{2}}\left( \left( 1-\lambda  \right)-r \right)+\frac{r}{2}{{\left( 1-2\lambda  \right)}^{2}} \right) \right) \\ 
 & \le \frac{\lambda a+\left( 1-\lambda  \right)b}{{{a}^{\lambda }}{{b}^{1-\lambda }}} \\ 
 & \le {{K}^{R}}\left( \frac{a}{b} \right) \\ 
 &\quad  \times \frac{1}{\exp \left( \frac{{{\left( b-a \right)}^{2}}}{8}\left( \left( R-\lambda  \right)+\left( R-\left( 1-\lambda  \right) \right)+{{\left( 1-2\lambda  \right)}^{2}} \right) \right)}, \\ 
\end{aligned}
\end{equation*}
where $r=\min \left\{ \lambda ,1-\lambda  \right\},\text{ }R=\max \left\{ \lambda ,1-\lambda  \right\}$,$\lambda \in \left[ 0,1 \right]$ and $K\left( \frac{a}{b} \right)=\frac{{{\left( a+b \right)}^{2}}}{4ab}$ is the Kantorovich constant.
\end{corollary}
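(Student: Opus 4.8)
The plan is to obtain Corollary~\ref{34} as the special case $\mu=\tfrac12$ of Remark~\ref{36}, applied to the function $f(x)=-\ln x$ on $(0,1]$, which by Remark~\ref{36} is strongly convex with modulus $c=\tfrac12$. First I would substitute $\mu=\tfrac12$ into the two extremal ratios of Remark~\ref{36}. Since $\lambda/\mu=2\lambda$ and $(1-\lambda)/(1-\mu)=2(1-\lambda)$, the minimum and maximum collapse to $m=2\min\{\lambda,1-\lambda\}=2r$ and $M=2\max\{\lambda,1-\lambda\}=2R$; this accounts for the factor of two relating the exponents in the two statements.

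Next I would simplify the geometric-mean base. With $\mu=\tfrac12$ one has $\dfrac{\mu a+(1-\mu)b}{a^{\mu}b^{1-\mu}}=\dfrac{a+b}{2\sqrt{ab}}$, whose square is exactly the Kantorovich constant $K\left(\tfrac ab\right)=\dfrac{(a+b)^2}{4ab}$. Hence the base equals $K^{1/2}\left(\tfrac ab\right)$, and raising it to the power $m=2r$ (respectively $M=2R$) produces $K^{r}\left(\tfrac ab\right)$ (respectively $K^{R}\left(\tfrac ab\right)$), which are exactly the bases appearing in the Corollary.

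The remaining task is to reduce the two exponential factors. In the lower exponent of Remark~\ref{36} the choice $\mu=\tfrac12$ gives $m\mu=m(1-\mu)=r$ and $m(\mu-\lambda)^2=2r\cdot\tfrac14(1-2\lambda)^2=\tfrac r2(1-2\lambda)^2$, and collecting the terms reproduces the lower exponent of the Corollary verbatim, with the prefactor $\tfrac{(b-a)^2}{2}$ unchanged. For the upper exponent, the substitution gives $(\mu-1)^2=\mu^2=\tfrac14$, $M\mu=M(1-\mu)=R$, and $(\mu-\lambda)^2=\tfrac14(1-2\lambda)^2$; factoring out the common $\tfrac14$ converts the prefactor $\tfrac{(b-a)^2}{2}$ into $\tfrac{(b-a)^2}{8}$ and leaves precisely the bracketed expression stated in the Corollary.

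The argument is purely computational once $\mu=\tfrac12$ is fixed, so there is no genuine obstacle. The only point demanding care is the bookkeeping of the constant $(\mu-\lambda)^2=\bigl(\tfrac12-\lambda\bigr)^2=\tfrac14(1-2\lambda)^2$ together with the factors of two coming from $m=2r$ and $M=2R$; it is precisely the interplay of these that shifts the outer constant from $\tfrac{(b-a)^2}{2}$ to $\tfrac{(b-a)^2}{8}$ in the upper estimate while preserving $\tfrac{(b-a)^2}{2}$ in the lower one.
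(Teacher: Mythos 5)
Your proposal is correct and follows exactly the paper's route: the paper's proof is the one-line observation that the corollary follows from Remark \ref{36} by taking $\mu=\tfrac12$, and your computations (the identifications $m=2r$, $M=2R$, the base $\frac{a+b}{2\sqrt{ab}}=K^{1/2}\left(\tfrac ab\right)$, and the bookkeeping of $(\mu-\lambda)^2=\tfrac14(1-2\lambda)^2$ that turns $\tfrac{(b-a)^2}{2}$ into $\tfrac{(b-a)^2}{8}$ in the upper bound) correctly fill in the algebra the paper leaves implicit.
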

\begin{proof}
The result follows from Remark \ref{36} by taking $\mu =\frac{1}{2}$.
\end{proof}
\begin{remark}
The following multiplicative refinement and reverse of the Young inequality in terms of Kantorovich's constant holds:
\begin{equation}\label{37}
{{K}^{r}}\left( \frac{a}{b} \right){{a}^{\lambda }}{{b}^{1-\lambda }}\le \lambda a+\left( 1-\lambda  \right)b\le {{K}^{R}}\left( \frac{a}{b} \right){{a}^{\lambda }}{{b}^{1-\lambda }},
\end{equation}
where $a,b>0,\text{ }\lambda \in \left[ 0,1 \right],\text{ }r=\min \left\{ \lambda ,1-\lambda  \right\}$ and $R=\max \left\{ \lambda ,1-\lambda  \right\}$.

The first inequality in \eqref{37} was obtained by Zou et al. in {{\cite[Corollary 3]{16}}} while the second one by Liao et al. {{\cite[Corollary 2.2]{17}}}.

Since $\exp \left( x \right)\ge 1$ for $x\ge 0$, Corollary \ref{34} essentially gives a refinement of the inequalities in \eqref{37}.
\end{remark}
\vskip 0.3 true cm
The key role in our proof for Theorem \ref{5} will be played by the following lemma.
\begin{lemma}\label{3}
 If $f:I\to \mathbb{R}$ is strongly convex with modulus $c$, then
\[f\left( {{x}_{1}}+{{x}_{n}}-{{x}_{i}} \right)\le f\left( {{x}_{1}} \right)+f\left( {{x}_{n}} \right)-f\left( {{x}_{i}} \right)-2c{{\lambda }_{i}}\left( 1-{{\lambda }_{i}} \right){{\left( {{x}_{1}}-{{x}_{n}} \right)}^{2}},\]
where ${{\lambda }_{i}}\in \left[ 0,1 \right]$, ${{x}_{1}}=\underset{1\le i\le n}{\mathop{\min }}\,{{x}_{i}},\text{ }{{x}_{n}}=\underset{1\le i\le n}{\mathop{\max }}\,{{x}_{i}}$ and $x_i\in I$.
\end{lemma}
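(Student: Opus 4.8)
The plan is to exploit the fact that the ``reflected'' point $x_1+x_n-x_i$ and the point $x_i$ itself are convex combinations of the two extreme values $x_1$ and $x_n$ with interchanged weights, and then to apply the defining inequality \eqref{8} to each of them and add.

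First I would fix $i$ and use $x_1\le x_i\le x_n$ to represent $x_i$ as a convex combination of the endpoints: there is a number $\lambda_i\in[0,1]$ (unique when $x_1<x_n$) with
\[x_i=\lambda_i x_1+(1-\lambda_i)x_n.\]
A one-line computation then gives
\[x_1+x_n-x_i=(1-\lambda_i)x_1+\lambda_i x_n,\]
so the reflected point is the convex combination of $x_1,x_n$ with the two weights swapped. (When $x_1=x_n$ everything collapses and the claim is trivial, so we may assume $x_1<x_n$.)

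Next I would apply the strong-convexity inequality \eqref{8} twice, once to each representation, always using the pair $(x_1,x_n)$. Applying \eqref{8} to $x_i=\lambda_i x_1+(1-\lambda_i)x_n$ yields
\[f(x_i)\le \lambda_i f(x_1)+(1-\lambda_i)f(x_n)-c\lambda_i(1-\lambda_i)(x_1-x_n)^2,\]
and applying it to $x_1+x_n-x_i=(1-\lambda_i)x_1+\lambda_i x_n$ yields
\[f(x_1+x_n-x_i)\le (1-\lambda_i)f(x_1)+\lambda_i f(x_n)-c\lambda_i(1-\lambda_i)(x_1-x_n)^2.\]
The structural point that makes the argument work is that the strong-convexity defect $c\lambda(1-\lambda)(x_1-x_n)^2$ is symmetric under $\lambda\mapsto 1-\lambda$, so the \emph{same} correction term appears in both lines.

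Finally I would add the two displayed inequalities. On the right-hand side the coefficients of $f(x_1)$ add up to $\lambda_i+(1-\lambda_i)=1$, and likewise those of $f(x_n)$, so we obtain
\[f(x_1+x_n-x_i)+f(x_i)\le f(x_1)+f(x_n)-2c\lambda_i(1-\lambda_i)(x_1-x_n)^2,\]
and transposing $f(x_i)$ to the right-hand side gives exactly the assertion. I do not anticipate any genuine obstacle: the only thing to watch is the choice of the convex representation, so that the quadratic defect enters with the symmetric coefficient $\lambda_i(1-\lambda_i)$ and therefore doubles upon summation, producing the factor $2c$ in the conclusion.
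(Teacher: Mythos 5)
Your proof is correct and is essentially the paper's own argument: the paper also writes $x_i=\lambda_i x_1+(1-\lambda_i)x_n$ and $x_1+x_n-x_i=(1-\lambda_i)x_1+\lambda_i x_n$, applies \eqref{8} to each representation, and combines the two symmetric defect terms to get the factor $2c$; the only cosmetic difference is that the paper chains the two applications of \eqref{8} in one string of inequalities rather than adding them.
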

\begin{proof}
We use the strategy of Mercer {{\cite[Lemma 1.3]{2}}}. Let ${{y}_{i}}={{x}_{1}}+{{x}_{n}}-{{x}_{i}},$ $i=1,2,\ldots ,n$. We may write ${{x}_{i}}={{\lambda }_{i}}{{x}_{1}}+\left( 1-{{\lambda }_{i}} \right){{x}_{n}}$ and ${{y}_{i}}=\left( 1-{{\lambda }_{i}} \right){{x}_{1}}+{{\lambda }_{i}}{{x}_{n}}$ where ${{\lambda }_{i}}\in \left[ 0,1 \right]$.

Now, using simple calculations, we obtain
\[\begin{aligned}
   f\left( {{y}_{i}} \right)&=f\left( \left( 1-{{\lambda }_{i}} \right){{x}_{1}}+{{\lambda }_{i}}{{x}_{n}} \right) \\ 
 & \le \left( 1-{{\lambda }_{i}} \right)f\left( {{x}_{1}} \right)+{{\lambda }_{i}}f\left( {{x}_{n}} \right)-c{{\lambda }_{i}}\left( 1-{{\lambda }_{i}} \right){{\left( {{x}_{1}}-{{x}_{n}} \right)}^{2}} \quad \text{(by \eqref{8})} \\ 
 & =f\left( {{x}_{1}} \right)+f\left( {{x}_{n}} \right)-\left( {{\lambda }_{i}}f\left( {{x}_{1}} \right)+\left( 1-{{\lambda }_{i}} \right)f\left( {{x}_{n}} \right) \right)-c{{\lambda }_{i}}\left( 1-{{\lambda }_{i}} \right){{\left( {{x}_{1}}-{{x}_{n}} \right)}^{2}} \\ 
 & \le f\left( {{x}_{1}} \right)+f\left( {{x}_{n}} \right)-f\left( {{\lambda }_{i}}{{x}_{1}}+\left( 1-{{\lambda }_{i}} \right){{x}_{n}} \right)-2c{{\lambda }_{i}}\left( 1-{{\lambda }_{i}} \right){{\left( {{x}_{1}}-{{x}_{n}} \right)}^{2}} \quad \text{(by \eqref{8})}\\ 
 & =f\left( {{x}_{1}} \right)+f\left( {{x}_{n}} \right)-f\left( {{x}_{i}} \right)-2c{{\lambda }_{i}}\left( 1-{{\lambda }_{i}} \right){{\left( {{x}_{1}}-{{x}_{n}} \right)}^{2}},  
\end{aligned}\]
which completes the proof.
\end{proof}
The Lemma \ref{3}  follows also from the fact that strongly convex functions are strongly Wright-convex (see, e.g., \cite{49}).
\vskip 0.3 true cm
At this point our aim is to present Jensen-Mercer's inequality for strongly convex functions.
\begin{theorem}\label{5}
Let $f:I\to \mathbb{R}$ be a strongly convex with modulus $c$, then
\[\begin{aligned}
  & f\left( {{x}_{1}}+{{x}_{n}}-\sum\limits_{i=1}^{n}{{{p}_{i}}{{x}_{i}}} \right) \\ 
 & \le f\left( {{x}_{1}} \right)+f\left( {{x}_{n}} \right)-\sum\limits_{i=1}^{n}{{{p}_{i}}f\left( {{x}_{i}} \right)}-c\left( 2\sum\limits_{i=1}^{n}{{{p}_{i}}{{\lambda }_{i}}\left( 1-{{\lambda }_{i}} \right){{\left( {{x}_{1}}-{{x}_{n}} \right)}^{2}}}+\sum\limits_{i=1}^{n}{{{p}_{i}}{{\left( {{x}_{i}}-\sum\limits_{i=1}^{n}{{{p}_{i}}{{x}_{i}}} \right)}^{2}}} \right), \\ 
\end{aligned}\]
where $\sum\limits_{i=1}^{n}{{{p}_{i}}}=1,\text{ }{{\lambda }_{i}}\in \left[ 0,1 \right],\text{ }{{x}_{1}}=\underset{1\le i\le n}{\mathop{\min }}\,{{x}_{i}},\text{ }{{x}_{n}}=\underset{1\le i\le n}{\mathop{\max }}\,{{x}_{i}}$ and $x_i\in I$.
\end{theorem}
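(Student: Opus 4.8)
The plan is to adapt Mercer's reflection argument to the strongly convex setting, combining the strong Jensen inequality of Lemma \ref{2} with the two-point reflection estimate of Lemma \ref{3}. Write $\overline{x}=\sum_{i=1}^{n}p_i x_i$ and introduce the reflected points $y_i:=x_1+x_n-x_i$ for $i=1,\ldots,n$. Since $\sum_{i=1}^{n}p_i=1$, one has $\sum_{i=1}^{n}p_i y_i=x_1+x_n-\overline{x}$, so that the left-hand side of the desired inequality is exactly $f\!\left(\sum_{i=1}^{n}p_i y_i\right)$.

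First I would apply Lemma \ref{2} to the $n$-tuple $(y_1,\ldots,y_n)$ with weights $(p_1,\ldots,p_n)$. Writing $\overline{y}=\sum_{i=1}^{n}p_i y_i=x_1+x_n-\overline{x}$, the lemma gives
\[
f(\overline{y})\le \sum_{i=1}^{n}p_i f(y_i)-c\sum_{i=1}^{n}p_i(y_i-\overline{y})^2.
\]
The crucial observation is that $y_i-\overline{y}=(x_1+x_n-x_i)-(x_1+x_n-\overline{x})=\overline{x}-x_i$, so $(y_i-\overline{y})^2=(x_i-\overline{x})^2$ and this correction term becomes precisely $c\sum_{i=1}^{n}p_i(x_i-\overline{x})^2$, matching the second quadratic term in the statement. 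Next I would bound each $f(y_i)=f(x_1+x_n-x_i)$ using Lemma \ref{3}, namely
\[
f(y_i)\le f(x_1)+f(x_n)-f(x_i)-2c\lambda_i(1-\lambda_i)(x_1-x_n)^2.
\]
Multiplying by $p_i$, summing over $i$, and using $\sum_{i=1}^{n}p_i=1$ yields
\[
\sum_{i=1}^{n}p_i f(y_i)\le f(x_1)+f(x_n)-\sum_{i=1}^{n}p_i f(x_i)-2c\sum_{i=1}^{n}p_i\lambda_i(1-\lambda_i)(x_1-x_n)^2.
\]
Substituting this into the estimate obtained from Lemma \ref{2} and collecting the common factor $c$ produces exactly the asserted inequality.

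I do not expect a genuine obstacle here; the argument is a clean composition of the two lemmas. The only point requiring care is the bookkeeping of the two distinct quadratic correction terms: one arising from the strong Jensen step, which after the reflection identity $y_i-\overline{y}=\overline{x}-x_i$ reproduces $\sum_{i=1}^{n}p_i(x_i-\overline{x})^2$, and one arising from the $n$-fold application of Lemma \ref{3}, which contributes $2\sum_{i=1}^{n}p_i\lambda_i(1-\lambda_i)(x_1-x_n)^2$. Verifying that these combine under a single factor $c$ into the bracketed expression in the theorem is the essential, if routine, step.
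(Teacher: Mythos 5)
Your proposal is correct and follows essentially the same route as the paper's own proof: rewrite $f\left(x_1+x_n-\overline{x}\right)$ as $f\left(\sum_{i=1}^{n}p_i\left(x_1+x_n-x_i\right)\right)$, apply Lemma \ref{2} to the reflected points (whose variance term equals $\sum_{i=1}^{n}p_i\left(x_i-\overline{x}\right)^2$), and then bound each $f\left(x_1+x_n-x_i\right)$ via Lemma \ref{3}. The bookkeeping you describe is exactly what the paper does.
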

\begin{proof}
A straightforward computation gives that
\[\begin{aligned}
  & f\left( {{x}_{1}}+{{x}_{n}}-\sum\limits_{i=1}^{n}{{{p}_{i}}{{x}_{i}}} \right) \\ 
 & =f\left( \sum\limits_{i=1}^{n}{{{p}_{i}}\left( {{x}_{1}}+{{x}_{n}}-{{x}_{i}} \right)} \right) \\ 
 & \le \sum\limits_{i=1}^{n}{{{p}_{i}}f\left( {{x}_{1}}+{{x}_{n}}-{{x}_{i}} \right)}-c\sum\limits_{i=1}^{n}{{{p}_{i}}{{\left( {{x}_{i}}-\sum\limits_{i=1}^{n}{{{p}_{i}}{{x}_{i}}} \right)}^{2}}} \quad \text{(by Lemma \ref{2})}\\ 
 & \le f\left( {{x}_{1}} \right)+f\left( {{x}_{n}} \right)-\sum\limits_{i=1}^{n}{{{p}_{i}}f\left( {{x}_{i}} \right)} \\ 
 &\quad  -c\left( 2\sum\limits_{i=1}^{n}{{{p}_{i}}{{\lambda }_{i}}\left( 1-{{\lambda }_{i}} \right){{\left( {{x}_{1}}-{{x}_{n}} \right)}^{2}}}+\sum\limits_{i=1}^{n}{{{p}_{i}}{{\left( {{x}_{i}}-\sum\limits_{i=1}^{n}{{{p}_{i}}{{x}_{i}}} \right)}^{2}}} \right) \quad \text{(by Lemma \ref{3})},\\ 
\end{aligned}\]
as required.
\end{proof}
\begin{remark}\label{31}
Based on Theorem \ref{5}, we obtain that
\begin{equation*}
\begin{aligned}
  & f\left( {{x}_{1}}+{{x}_{n}}-\sum\limits_{i=1}^{n}{{{p}_{i}}{{x}_{i}}} \right) \\ 
 & \le f\left( {{x}_{1}} \right)+f\left( {{x}_{n}} \right)-\sum\limits_{i=1}^{n}{{{p}_{i}}f\left( {{x}_{i}} \right)}-c\left( 2\sum\limits_{i=1}^{n}{{{p}_{i}}{{\lambda }_{i}}\left( 1-{{\lambda }_{i}} \right){{\left( {{x}_{1}}-{{x}_{n}} \right)}^{2}}}+\sum\limits_{i=1}^{n}{{{p}_{i}}{{\left( {{x}_{i}}-\sum\limits_{i=1}^{n}{{{p}_{i}}{{x}_{i}}} \right)}^{2}}} \right) \\ 
 & \le f\left( {{x}_{1}} \right)+f\left( {{x}_{n}} \right)-\sum\limits_{i=1}^{n}{{{p}_{i}}f\left( {{x}_{i}} \right)}. \\ 
\end{aligned}
\end{equation*}
\end{remark}
\begin{corollary}\label{2.2}
Let us now define
\[\widetilde{\mathscr{A}}:={{x}_{1}}+{{x}_{n}}-\mathscr{A},\qquad \widetilde{\mathscr{G}}:=\frac{{{x}_{1}}{{x}_{n}}}{\mathscr{G}},\]
where $\mathscr{A}$ and $\mathscr{G}$ denote the usual arithmetic and geometric means respectively.

As mentioned above, the function $f:\left( 0,1 \right]\to \left[ 0,\infty  \right),\text{ }f\left( x \right)=-\ln x$ is strongly convex with modulus $c=\frac{1}{2}$. From Remark \ref{31} we obtain
\[-\ln \widetilde{\mathscr{A}}\le -\ln \widetilde{\mathscr{G}}-\frac{1}{2}\mathsf{M}\le -\ln \widetilde{\mathscr{G}},\]
where $\mathsf{M}=2\sum\limits_{i=1}^{n}{{{p}_{i}}{{\lambda }_{i}}\left( 1-{{\lambda }_{i}} \right){{\left( {{x}_{1}}-{{x}_{n}} \right)}^{2}}}+\sum\limits_{i=1}^{n}{{{p}_{i}}{{\left( {{x}_{i}}-\sum\limits_{i=1}^{n}{{{p}_{i}}{{x}_{i}}} \right)}^{2}}}$. Hence
\begin{equation}\label{16}
\widetilde{\mathscr{G}}\le {{e}^{\frac{1}{2}\mathsf{M}}}\widetilde{\mathscr{G}}\le \widetilde{\mathscr{A}}.
\end{equation}
\end{corollary}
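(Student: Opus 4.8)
The plan is to specialize the two-sided estimate recorded in Remark \ref{31} to the concrete strongly convex function $f(x)=-\ln x$ and then pass to the exponential. First I would fix the data: weights $p_i$ with $\sum_{i=1}^n p_i=1$, points $x_i\in(0,1]$, and $x_1=\min_i x_i$, $x_n=\max_i x_i$, so that $\mathscr{A}=\sum_i p_i x_i$ and $\mathscr{G}=\prod_i x_i^{p_i}$ are the weighted arithmetic and geometric means. Before invoking anything I would verify that every argument entering the Jensen--Mercer bound lies in the domain $(0,1]$: since $x_1\le x_i\le x_n$ we have $x_1+x_n-x_i\in[x_1,x_n]\subset(0,1]$, and likewise $\widetilde{\mathscr{A}}=x_1+x_n-\mathscr{A}\in[x_1,x_n]\subset(0,1]$, so $-\ln$ is legitimately evaluated throughout.

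Next I would insert $f=-\ln$ and $c=\tfrac12$ into Remark \ref{31} and simplify the three pieces. The leftmost term is $f(\widetilde{\mathscr{A}})=-\ln\widetilde{\mathscr{A}}$. For the additive constant I would use $\sum_i p_i\ln x_i=\ln\prod_i x_i^{p_i}=\ln\mathscr{G}$, giving
\[
f\left( x_1 \right)+f\left( x_n \right)-\sum_{i=1}^n p_i f\left( x_i \right)=-\ln\left( x_1 x_n \right)+\ln\mathscr{G}=-\ln\frac{x_1 x_n}{\mathscr{G}}=-\ln\widetilde{\mathscr{G}}.
\]
With these identifications Remark \ref{31} reads exactly as the intermediate chain
\[
-\ln\widetilde{\mathscr{A}}\le-\ln\widetilde{\mathscr{G}}-\tfrac12\mathsf{M}\le-\ln\widetilde{\mathscr{G}},
\]
the second inequality being immediate from $\mathsf{M}\ge 0$.

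Finally I would exponentiate. Negating the first inequality reverses it to $\ln\widetilde{\mathscr{A}}\ge\ln\widetilde{\mathscr{G}}+\tfrac12\mathsf{M}$, and applying the increasing map $\exp$ yields $\widetilde{\mathscr{A}}\ge\widetilde{\mathscr{G}}\,e^{\frac12\mathsf{M}}$; meanwhile $e^{\frac12\mathsf{M}}\ge 1$ gives $\widetilde{\mathscr{G}}\le\widetilde{\mathscr{G}}\,e^{\frac12\mathsf{M}}$, and together these produce \eqref{16}. There is no genuine obstacle here, since all the content is carried by Remark \ref{31}; the only points that require a touch of care are the domain check above and the sign bookkeeping when passing through the logarithm and the exponential, in particular that the strong-convexity correction $-\tfrac12\mathsf{M}$ becomes the multiplicative factor $e^{\frac12\mathsf{M}}\ge 1$ which sharpens the plain bound $\widetilde{\mathscr{G}}\le\widetilde{\mathscr{A}}$.
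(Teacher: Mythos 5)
Your proposal is correct and follows exactly the route the paper takes: specializing Remark \ref{31} to $f(x)=-\ln x$ with modulus $c=\tfrac12$, identifying $f(x_1)+f(x_n)-\sum_i p_i f(x_i)=-\ln\widetilde{\mathscr{G}}$, and exponentiating the resulting chain. The added domain check that $x_1+x_n-x_i$ and $\widetilde{\mathscr{A}}$ stay in $(0,1]$ is a sensible detail the paper leaves implicit.
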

\begin{remark}
The inequality \eqref{16} is better than an inequality given in {{\cite[Lemma 1.1]{4}}}.
\end{remark}
\section{\bf Jensen Operator Inequality For Strongly Convex Functions}
\vskip 0.4 true cm
Mond and Pe\v cari\'c \cite{15} gave an operator extension of the Jensen inequality as follows:
\begin{theorem}\label{80}
Let $A\in \mathcal{B}\left( \mathcal{H} \right)$ be a self-adjoint operator with $Sp\left( A \right)\subset \left[ m,M \right]$ for some scalars $m<M$. If $f\left( t \right)$ is a convex function on $\left[ m,M \right]$, then  
\begin{equation}\label{40}
f\left( \left\langle Ax,x \right\rangle \right)\le \left\langle f\left( A \right)x,x \right\rangle ,
\end{equation}
for any unit vector $x\in \mathcal{H}$.
\end{theorem}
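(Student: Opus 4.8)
The plan is to reduce this operator statement to a scalar supporting-line inequality for $f$ and then lift that inequality to operators through the functional (spectral) calculus. First I would record the basic fact that the number $t_0:=\left\langle Ax,x\right\rangle$ lies in $[m,M]$. Indeed, the spectral condition $Sp(A)\subset[m,M]$ is equivalent to the operator inequality $mI\le A\le MI$, so for a unit vector $x$ one gets $m=\left\langle mIx,x\right\rangle\le\left\langle Ax,x\right\rangle\le\left\langle MIx,x\right\rangle=M$. This guarantees that $f$ is defined and convex on a neighbourhood (within $[m,M]$) of the point at which we will expand.

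Next I would invoke the support-line characterization of convexity, which is exactly the degenerate case $c=0$ of the quadratic-support property \eqref{41} stated in the introduction. Convexity of $f$ on $[m,M]$ provides a subgradient $\alpha\in\mathbb{R}$ at $t_0$ (for instance $\alpha=f'_+(t_0)$, or any value between the one-sided derivatives) such that
\[
f(t)\ge f(t_0)+\alpha\,(t-t_0),\qquad t\in[m,M].
\]
Equivalently, the scalar function $g(t):=f(t)-f(t_0)-\alpha\,(t-t_0)$ is nonnegative on all of $[m,M]$, hence in particular on $Sp(A)$.

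The lifting step is then immediate from the functional calculus: since $g$ is continuous and $g\ge0$ on the spectrum of the self-adjoint operator $A$, we obtain the operator inequality $g(A)\ge0$, that is,
\[
f(A)\ge f(t_0)\,I+\alpha\,(A-t_0 I).
\]
Taking the inner product of both sides with the unit vector $x$ and using $\left\langle x,x\right\rangle=1$ together with $\left\langle Ax,x\right\rangle=t_0$, the linear term cancels and we are left with
\[
\left\langle f(A)x,x\right\rangle\ge f(t_0)+\alpha\,(t_0-t_0)=f(t_0)=f\!\left(\left\langle Ax,x\right\rangle\right),
\]
which is precisely \eqref{40}.

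The only delicate point to check is the existence of the support line when $t_0$ happens to coincide with an endpoint $m$ or $M$, where an ordinary (two-sided) derivative need not exist; this is handled by using the appropriate one-sided derivative $f'_+(m)$ or $f'_-(M)$, which always exists for a convex function and still yields the required inequality $g(t)\ge0$ throughout $[m,M]$. Apart from this routine endpoint verification, every step is a direct translation of scalar convexity into the operator setting, so I do not expect any substantial obstacle.
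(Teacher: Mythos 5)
Your proof is correct: the paper states Theorem \ref{80} without proof, as a classical result of Mond and Pe\v{c}ari\'{c} \cite{15}, and your support-line argument lifted through the functional calculus is precisely the $c=0$ case of the argument the paper gives for its Theorem \ref{53} (quadratic support \eqref{41} applied to $A$ via functional calculus, inner product against the unit vector $x$, then the choice $x_{0}=\left\langle Ax,x\right\rangle$). The one point worth tightening is your endpoint remark: for a convex function the one-sided derivative at an endpoint exists only in the extended reals and may be infinite (e.g.\ $f(t)=-\sqrt{t-m}$ at $t=m$), but if $\left\langle Ax,x\right\rangle=m$ then $A\ge mI$ forces $\left\langle (A-mI)x,x\right\rangle=0$ and hence $Ax=mx$, so \eqref{40} holds with equality and no support line is needed there; the case $\left\langle Ax,x\right\rangle=M$ is symmetric.
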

The celebrated H\"older-McCarthy inequality \cite{57} which is a special case of Theorem \ref{80} asserts that:
\begin{theorem}\label{69}
Let $A$ be a positive operator on $\mathcal{H}$ . If $x\in \mathcal{H}$ is a unit vector, then
\begin{enumerate}[(a)]
\item ${{\left\langle Ax,x \right\rangle }^{r}}\le \left\langle {{A}^{r}}x,x \right\rangle $ for all $r>1$.
\item ${{\left\langle Ax,x \right\rangle }^{r}}\ge \left\langle {{A}^{r}}x,x \right\rangle $ for all $0<r<1$.
\end{enumerate}
\end{theorem}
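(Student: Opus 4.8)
The plan is to recognize Theorem \ref{69} as the direct specialization of the Mond--Pe\v cari\'c operator Jensen inequality (Theorem \ref{80}) to the power functions $f\left( t \right)={{t}^{r}}$. Since $A$ is positive, its spectrum lies in an interval $\left[ m,M \right]$ with $0\le m\le M$, on which $t\mapsto {{t}^{r}}$ is continuous; hence $f\left( A \right)={{A}^{r}}$ is well defined by the continuous functional calculus, and \eqref{40} is applicable once the convexity hypothesis is checked.

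For part (a) I would first note that for $r>1$ the function $f\left( t \right)={{t}^{r}}$ is convex on $\left[ 0,\infty  \right)$, since its second derivative $r\left( r-1 \right){{t}^{r-2}}$ is non-negative there. Applying \eqref{40} with this $f$ and the unit vector $x$ then gives immediately ${{\left\langle Ax,x \right\rangle }^{r}}=f\left( \left\langle Ax,x \right\rangle  \right)\le \left\langle f\left( A \right)x,x \right\rangle =\left\langle {{A}^{r}}x,x \right\rangle$, which is exactly the claimed inequality.

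For part (b), with $0<r<1$, the same power function is now concave on $\left[ 0,\infty  \right)$ because $r\left( r-1 \right){{t}^{r-2}}\le 0$; consequently $g\left( t \right)=-{{t}^{r}}$ is convex. Applying \eqref{40} to $g$ yields $-{{\left\langle Ax,x \right\rangle }^{r}}\le -\left\langle {{A}^{r}}x,x \right\rangle$, and multiplying through by $-1$ reverses the inequality to produce ${{\left\langle Ax,x \right\rangle }^{r}}\ge \left\langle {{A}^{r}}x,x \right\rangle$, as required.

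I do not expect any genuine obstacle in this argument, as the whole content is that Theorem \ref{69} is a corollary of Theorem \ref{80}. The only points demanding care are the correct identification of the sign of the second derivative (convex for $r>1$, concave for $0<r<1$), which dictates the direction of the inequality, and the observation that the spectrum of the positive operator $A$ is contained in the domain of ${{t}^{r}}$ so that the functional calculus and Theorem \ref{80} genuinely apply; both are routine and require no further estimates.
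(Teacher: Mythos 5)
Your proposal is correct and matches the paper's own treatment: the paper states Theorem \ref{69} without proof, noting only that it is a special case of the Mond--Pe\v{c}ari\'c inequality (Theorem \ref{80}), and your argument simply fills in that specialization by applying \eqref{40} to $f(t)=t^{r}$ for $r>1$ and to $g(t)=-t^{r}$ for $0<r<1$. The convexity/concavity checks and the functional-calculus remark are exactly the routine details the paper leaves implicit.
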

\vskip 0.3 true cm
Here we give a more precise estimation than inequality \eqref{40} for strongly convex functions with modulus $c$ as follows:
\begin{theorem}\label{53}
(Jensen operator inequality for strongly convex functions) Let $f:I\to \mathbb{R}$ be strongly convex with modulus $c$ and differentiable on $\overset{o}{\mathop{I}}\,$. If $A$ is a self-adjoint operator on the Hilbert space $\mathcal{H}$ with $Sp\left( A \right)\subset \overset{o}{\mathop{I}}\,$, then
\begin{equation}\label{70}
f\left( \left\langle Ax,x \right\rangle \right)\le \left\langle f\left( A \right)x,x \right\rangle -c\left( \left\langle {{A}^{2}}x,x \right\rangle -{{\left\langle Ax,x \right\rangle }^{2}} \right),
\end{equation}
for each $x\in \mathcal{H}$, with $\left\| x \right\|=1$.
\end{theorem}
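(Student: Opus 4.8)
The plan is to adapt the Mond--Pe\v cari\'c support-line technique, replacing the usual affine support by the \emph{quadratic} support guaranteed for strongly convex functions by \eqref{41}. First I would set $x_0:=\left\langle Ax,x\right\rangle$ and check that it lies in $\overset{o}{\mathop{I}}\,$. Writing $m=\min Sp(A)$ and $M=\max Sp(A)$, the hypothesis $Sp(A)\subset\overset{o}{\mathop{I}}\,$ forces the closed interval $[m,M]$ into the open interval $\overset{o}{\mathop{I}}\,$, and since $m\le\left\langle Ax,x\right\rangle\le M$ for every unit vector $x$, the point $x_0$ is interior. Hence the characterization \eqref{41} applies at $x_0$ and yields a scalar $l\in\mathbb{R}$ with
\[
f\left( t \right)\ge c{{\left( t-{{x}_{0}} \right)}^{2}}+l\left( t-{{x}_{0}} \right)+f\left( {{x}_{0}} \right),\qquad t\in I.
\]

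The second step is to promote this scalar inequality to an operator inequality via the functional calculus. Because $Sp(A)\subset I$ and the gap function $t\mapsto f(t)-c(t-x_0)^2-l(t-x_0)-f(x_0)$ is nonnegative on all of $I$ (in particular on $Sp(A)$), the order-preserving continuous functional calculus at the self-adjoint operator $A$ gives
\[
f\left( A \right)\ge c{{\left( A-{{x}_{0}}I \right)}^{2}}+l\left( A-{{x}_{0}}I \right)+f\left( {{x}_{0}} \right)I
\]
as an inequality between self-adjoint operators; here $f(A)$ is well defined since $f$ is continuous on the neighborhood $\overset{o}{\mathop{I}}\,$ of the compact set $Sp(A)$. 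Taking the inner product against the unit vector $x$ then reduces the right-hand side to three numerical quantities.

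The final step is a variance computation. Using $\left\langle x,x\right\rangle=1$ together with the choice $x_0=\left\langle Ax,x\right\rangle$, the linear term vanishes because $\left\langle (A-x_0I)x,x\right\rangle=\left\langle Ax,x\right\rangle-x_0=0$, while the quadratic term expands to $\left\langle (A-x_0I)^2x,x\right\rangle=\left\langle A^2x,x\right\rangle-x_0^2=\left\langle A^2x,x\right\rangle-\left\langle Ax,x\right\rangle^2$. Substituting these back and recalling $x_0=\left\langle Ax,x\right\rangle$ produces exactly \eqref{70}.

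I do not anticipate a serious obstacle, since this is the classical proof of \eqref{40} carried out one order higher, the affine support being upgraded to the quadratic support \eqref{41}. The only points requiring a little care are the verification that $x_0=\left\langle Ax,x\right\rangle$ is genuinely interior to $I$ (so that \eqref{41} is available), and the observation that the gap function is nonnegative throughout $I$—not merely at the single point $x_0$—which is what licenses the passage to the operator inequality through the functional calculus rather than a mere pointwise estimate.
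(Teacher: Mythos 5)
Your proposal is correct and follows essentially the same route as the paper's proof: apply the quadratic support inequality \eqref{41} at $x_0=\left\langle Ax,x\right\rangle$, pass to an operator inequality via the functional calculus, and take the inner product with the unit vector $x$ so that the linear term vanishes and the quadratic term becomes the variance $\left\langle A^2x,x\right\rangle-\left\langle Ax,x\right\rangle^2$. Your added checks (that $x_0$ is interior to $I$ and that the gap function is nonnegative on all of $Sp(A)$) are points the paper leaves implicit, but the argument is the same.
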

\begin{proof}
It follows from \eqref{41} by utilizing functional calculus that
\begin{equation}\label{62}
c\left( {{A}^{2}}+x_{0}^{2}I-2{{x}_{0}}A \right)+lA-l{{x}_{0}}I+f\left( {{x}_{0}} \right)I\le f\left( A \right),
\end{equation}
which is equivalent to
\begin{equation}\label{52}
c\left( \left\langle {{A}^{2}}x,x \right\rangle +x_{0}^{2}-2{{x}_{0}}\left\langle Ax,x \right\rangle \right)+l\left\langle Ax,x \right\rangle -l{{x}_{0}}+f\left( {{x}_{0}} \right)\le \left\langle f\left( A \right)x,x \right\rangle,
\end{equation}
for each $x\in \mathcal{H}$, with $\left\| x \right\|=1$.

Now, by applying \eqref{52} for ${{x}_{0}}=\left\langle Ax,x \right\rangle $, we deduce the desired inequality \eqref{70}.
\end{proof}
\begin{remark}
Notice that if $A$ is positive, then the quantity $\left\langle {{A}^{2}}x,x \right\rangle -{{\left\langle Ax,x \right\rangle }^{2}}$ is positive. Therefore we have
\[f\left( \left\langle Ax,x \right\rangle \right)\le \left\langle f\left( A \right)x,x \right\rangle -c\left( \left\langle {{A}^{2}}x,x \right\rangle -{{\left\langle Ax,x \right\rangle }^{2}} \right)\le \left\langle f\left( A \right)x,x \right\rangle. \]
\end{remark}
\begin{corollary}\label{74}
(Applications for H\"older-McCarthy's inequality)

\begin{itemize}
\item Consider the function $f:\left( 1,\infty  \right)\to \mathbb{R}$, $f\left( x \right)={{x}^{r}}$ with $r\ge 2$.  It can be easily verified that this function is strongly convex with modulus $c=\frac{{{r}^{2}}-r}{2}$. Based on this fact, from Theorem \ref{53} we obtain
\begin{equation}\label{71}
{{\left\langle Ax,x \right\rangle }^{r}}\le \left\langle {{A}^{r}}x,x \right\rangle -\frac{{{r}^{2}}-r}{2}\left( \left\langle {{A}^{2}}x,x \right\rangle -{{\left\langle Ax,x \right\rangle }^{2}} \right),
\end{equation}
for each positive operator $A$ with $Sp\left( A\right) \subset \left( 1,\infty \right) $ and $x\in \mathcal{H}$ with $\left\| x \right\|=1$.
\end{itemize}
It is obvious that the inequality \eqref{71} is a refinement of Theorem \ref{69} (a).
\begin{itemize}
\item It is readily checked that the function $f:\left( 0,1 \right)\to \mathbb{R}$, $f\left( x \right)=-{{x}^{r}}$ with $0<r<1$ is a strongly convex function with modulus $c=\frac{r-{{r}^{2}}}{2}$. Similarly to the above, by using Theorem \ref{53} we get
\begin{equation}\label{72}
\left\langle {{A}^{r}}x,x \right\rangle \le {{\left\langle Ax,x \right\rangle }^{r}}+\frac{r-{{r}^{2}}}{2}\left( {{\left\langle Ax,x \right\rangle }^{2}}-\left\langle {{A}^{2}}x,x \right\rangle \right),
\end{equation}
for each positive operator $A$ with $Sp\left( A\right) \subset \left( 0,1 \right) $ and $x\in \mathcal{H}$ with $\left\| x \right\|=1$.
\end{itemize}
Apparently, inequality \eqref{72} is a refinement of Theorem \ref{69} (b).
\end{corollary}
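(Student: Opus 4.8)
The plan is to treat Corollary~\ref{74} as a direct specialization of Theorem~\ref{53}: once the two power functions are shown to be strongly convex with the claimed moduli on the indicated intervals, inequalities \eqref{71} and \eqref{72} fall out by substituting $f(A)=A^r$ (respectively $-A^r$) into \eqref{70}. So the only genuine content is the verification of strong convexity, and the most efficient route is the differential criterion behind Remark~\ref{36}: a twice-differentiable $f$ is strongly convex with modulus $c$ precisely when $g(x)=f(x)-cx^2$ is convex, i.e. when $f''(x)\ge 2c$ throughout the domain.

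For the first bullet, with $f(x)=x^r$ and $r\ge 2$ I would compute $f''(x)=r(r-1)x^{r-2}$ and note that the proposed modulus gives $2c=r^2-r=r(r-1)$. The strong-convexity condition then reads $r(r-1)x^{r-2}\ge r(r-1)$, i.e. $x^{r-2}\ge 1$, which holds on $(1,\infty)$ because $r-2\ge 0$ and $x>1$. For the second bullet, with $f(x)=-x^r$ and $0<r<1$ I would use $f''(x)=r(1-r)x^{r-2}$ and $2c=r-r^2=r(1-r)$; the condition again reduces to $x^{r-2}\ge 1$, which now holds on $(0,1)$ since the exponent $r-2$ is negative and $x<1$. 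In both cases equality $f''=2c$ is attained only in the limit $x\to 1$, so the stated modulus is in fact the largest admissible one.

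With strong convexity established, applying \eqref{70} to $f(x)=x^r$ yields \eqref{71} immediately, using $\langle f(A)x,x\rangle=\langle A^r x,x\rangle$ and $f(\langle Ax,x\rangle)=\langle Ax,x\rangle^r$. For $f(x)=-x^r$, inequality \eqref{70} gives
\[
-\langle Ax,x\rangle^r \le -\langle A^r x,x\rangle - \tfrac{r-r^2}{2}\bigl(\langle A^2 x,x\rangle - \langle Ax,x\rangle^2\bigr),
\]
and multiplying through by $-1$ and reordering the terms produces exactly \eqref{72}. The refinement claims over Theorem~\ref{69} then follow from the remark preceding the corollary, since for a positive operator $A$ the subtracted quantity $\langle A^2 x,x\rangle-\langle Ax,x\rangle^2$ is nonnegative.

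The point I expect to demand the most care is not any computation but the role of the domain. On all of $(0,\infty)$ the function $x^r$ is \emph{not} strongly convex with modulus $\tfrac{r^2-r}{2}$, because for $0<x<1$ one has $x^{r-2}<1$ and hence $f''(x)<2c$; it is precisely the restriction $Sp(A)\subset(1,\infty)$ (respectively $Sp(A)\subset(0,1)$) that forces $f''\ge 2c$ and thereby legitimizes the chosen modulus. Keeping track of this interval dependence is the one place where the argument could go astray.
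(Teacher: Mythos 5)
Your proposal is correct and follows the same route as the paper: the paper simply asserts the strong convexity of $x^r$ on $(1,\infty)$ and $-x^r$ on $(0,1)$ with the stated moduli and plugs into Theorem \ref{53}, and your second-derivative check $f''\ge 2c$ (equivalently, convexity of $f(x)-cx^2$ as in Remark \ref{36}) is exactly the intended verification. The only quibble is the closing aside that equality $f''=2c$ holds ``only in the limit $x\to 1$'': for $r=2$ it holds identically; this does not affect the argument.
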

\vskip 0.3 true cm
Now, we draw special attention to the case ${{f}^{\nu }},\text{ }\nu \in \left[ 0,1 \right]$ is strongly convex for which further refinement is possible.
\begin{theorem}\label{32}
Let $f:I\to \mathbb{R}$ be non-negative and strongly convex with modulus $c$. If ${{f}^{\nu }}$ with $\nu \in \left[ 0,1 \right]$ is strongly convex with modulus $c'$, then
\begin{equation}\label{4.3.3}
\begin{aligned}
  & f\left( \left\langle Ax,x \right\rangle  \right) \\
 & \le {{f}^{1-\nu }}\left( \left\langle Ax,x \right\rangle  \right){{f}^{\nu }}\left( \left\langle Ax,x \right\rangle  \right)+c'{{f}^{1-\nu }}\left( \left\langle Ax,x \right\rangle  \right)\left( \left\langle {{A}^{2}}x,x \right\rangle -{{\left\langle Ax,x \right\rangle }^{2}} \right) \\
 & \le {{f}^{1-\nu }}\left( \left\langle Ax,x \right\rangle  \right)\left\langle {{f}^{\nu }}\left( A \right)x,x \right\rangle  \\
 & \le {{f}^{1-\nu }}\left( \left\langle Ax,x \right\rangle  \right){{\left\langle f\left( A \right)x,x \right\rangle }^{\nu }} \\
 & \le \left( 1-\nu  \right)f\left( \left\langle Ax,x \right\rangle  \right)+\nu \left\langle f\left( A \right)x,x \right\rangle  \\
 & \le \left\langle f\left( A \right)x,x \right\rangle -c\left( 1-\nu  \right)\left( \left\langle {{A}^{2}}x,x \right\rangle -{{\left\langle Ax,x \right\rangle }^{2}} \right) \\
 & \le \left\langle f\left( A \right)x,x \right\rangle,
\end{aligned}
\end{equation}
for any positive operator $A$ with $Sp\left( A \right)\subset I$ and unit vector $x\in \mathcal{H}$.
\end{theorem}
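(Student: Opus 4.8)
The plan is to read the seven-term chain as six separate inequalities and to recognize each link as an instance of a result already at hand. Throughout, the hypotheses arrange the signs so that the chain is consistent: non-negativity of $f$ forces $f(A)\ge 0$, hence $\langle f(A)x,x\rangle\ge 0$ and $\langle f^{\nu}(A)x,x\rangle\ge 0$, while the scalar factor $f^{1-\nu}(\langle Ax,x\rangle)$ is non-negative, the moduli $c,c'$ are positive, and positivity of $A$ makes the variance term $\langle A^{2}x,x\rangle-\langle Ax,x\rangle^{2}$ non-negative. These sign facts are exactly what let me cancel and multiply freely below.

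First I would dispose of the two extreme links, which are pure non-negativity. Since $f^{1-\nu}(\langle Ax,x\rangle)f^{\nu}(\langle Ax,x\rangle)=f(\langle Ax,x\rangle)$, the opening line differs from the second only by the manifestly non-negative summand $c'f^{1-\nu}(\langle Ax,x\rangle)(\langle A^{2}x,x\rangle-\langle Ax,x\rangle^{2})$, giving the first inequality. The closing inequality is equally immediate, as $c(1-\nu)(\langle A^{2}x,x\rangle-\langle Ax,x\rangle^{2})\ge 0$.

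Next I would handle the two strong-convexity links by invoking Theorem \ref{53} twice. Applying it to $f^{\nu}$, which is strongly convex with modulus $c'$, yields $f^{\nu}(\langle Ax,x\rangle)+c'(\langle A^{2}x,x\rangle-\langle Ax,x\rangle^{2})\le\langle f^{\nu}(A)x,x\rangle$; multiplying through by the non-negative factor $f^{1-\nu}(\langle Ax,x\rangle)$ produces the second link. For the fifth link, Theorem \ref{53} applied to $f$ itself gives $f(\langle Ax,x\rangle)\le\langle f(A)x,x\rangle-c(\langle A^{2}x,x\rangle-\langle Ax,x\rangle^{2})$; multiplying by $(1-\nu)\ge 0$ and then adding $\nu\langle f(A)x,x\rangle$ to both sides reproduces exactly the fifth inequality.

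The two remaining links carry the operator-theoretic and scalar content. After cancelling $f^{1-\nu}(\langle Ax,x\rangle)$, the third link reduces to $\langle f^{\nu}(A)x,x\rangle\le\langle f(A)x,x\rangle^{\nu}$; since $f(A)$ is a positive operator and $0\le\nu\le 1$, the functional-calculus identity $f^{\nu}(A)=(f(A))^{\nu}$ lets me read this off directly from the H\"older-McCarthy inequality, Theorem \ref{69}(b). The fourth link is the scalar weighted arithmetic-geometric mean (Young) inequality $u^{1-\nu}v^{\nu}\le(1-\nu)u+\nu v$ applied to the non-negative numbers $u=f(\langle Ax,x\rangle)$ and $v=\langle f(A)x,x\rangle$. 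I expect the main obstacle to be bookkeeping rather than depth: one must check every positivity hypothesis needed to cancel and to multiply by $f^{1-\nu}(\langle Ax,x\rangle)$ legitimately and to invoke Theorem \ref{69}(b), and one must justify $f^{\nu}(A)=(f(A))^{\nu}$ through the continuous functional calculus. Concatenating the six verified links then delivers the full chain.
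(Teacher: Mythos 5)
Your proposal is correct and follows essentially the same route as the paper: both apply Theorem \ref{53} to $f^{\nu}$ and multiply by the non-negative factor $f^{1-\nu}\left( \left\langle Ax,x \right\rangle \right)$, then chain the H\"older--McCarthy inequality $\left\langle {{f}^{\nu }}\left( A \right)x,x \right\rangle \le {{\left\langle f\left( A \right)x,x \right\rangle }^{\nu }}$, Young's inequality, and Theorem \ref{53} applied to $f$ itself, with the extreme links being sign observations. Your link-by-link bookkeeping (including the explicit note that ${{f}^{\nu }}\left( A \right)={{\left( f\left( A \right) \right)}^{\nu }}$ via the functional calculus) is just a more granular presentation of the paper's two displayed estimates.
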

\begin{proof}
As we have assumed above ${{f}^{\nu }}$ is strongly convex, so \eqref{70} gives
\begin{equation*}
{{f}^{\nu }}\left( \left\langle Ax,x \right\rangle \right)\le \left\langle {{f}^{\nu }}\left( A \right)x,x \right\rangle -c'\left( \left\langle {{A}^{2}}x,x \right\rangle -{{\left\langle Ax,x \right\rangle }^{2}} \right).
\end{equation*}
Multiplying both sides by ${{f}^{1-\nu }}\left( \left\langle Ax,x \right\rangle \right)$ we infer that
\[\begin{aligned}
& f\left( \left\langle Ax,x \right\rangle \right) \\
& ={{f}^{1-\nu }}\left( \left\langle Ax,x \right\rangle \right){{f}^{\nu }}\left( \left\langle Ax,x \right\rangle \right) \\
& \le {{f}^{1-\nu }}\left( \left\langle Ax,x \right\rangle \right)\left\langle {{f}^{\nu }}\left( A \right)x,x \right\rangle -c'{{f}^{1-\nu }}\left( \left\langle Ax,x \right\rangle \right)\left( \left\langle {{A}^{2}}x,x \right\rangle -{{\left\langle Ax,x \right\rangle }^{2}} \right). \\
\end{aligned}\]
Rearranging the terms, we obtain
\begin{equation}\label{3.4.1}
\begin{aligned}
& f\left( \left\langle Ax,x \right\rangle \right) \\
& \le {{f}^{1-\nu }}\left( \left\langle Ax,x \right\rangle \right){{f}^{\nu }}\left( \left\langle Ax,x \right\rangle \right)+c'{{f}^{1-\nu }}\left( \left\langle Ax,x \right\rangle \right)\left( \left\langle {{A}^{2}}x,x \right\rangle -{{\left\langle Ax,x \right\rangle }^{2}} \right) \\
& \le {{f}^{1-\nu }}\left( \left\langle Ax,x \right\rangle \right)\left\langle {{f}^{\nu }}\left( A \right)x,x \right\rangle. \\
\end{aligned}
\end{equation}
On the other hand, by using H\"older-McCarthy inequality we have
\begin{equation*}
\left\langle {{f}^{\nu }}\left( A \right)x,x \right\rangle \le {{\left\langle f\left( A \right)x,x \right\rangle }^{\nu }}.
\end{equation*}
Multiplying both sides by ${{f}^{1-\nu }}\left( \left\langle Ax,x \right\rangle \right)$ we obtain
\begin{equation}\label{3.4.2}
\begin{aligned}
   {{f}^{1-\nu }}\left( \left\langle Ax,x \right\rangle  \right)\left\langle {{f}^{\nu }}\left( A \right)x,x \right\rangle &\le {{f}^{1-\nu }}\left( \left\langle Ax,x \right\rangle  \right){{\left\langle f\left( A \right)x,x \right\rangle }^{\nu }} \\
 & \le \left( 1-\nu  \right)f\left( \left\langle Ax,x \right\rangle  \right)+\nu \left\langle f\left( A \right)x,x \right\rangle \quad \text{(by Young's inequality)} \\
 & \le \left\langle f\left( A \right)x,x \right\rangle -c \left( 1-\nu  \right) \left( \left\langle {{A}^{2}}x,x \right\rangle -{{\left\langle Ax,x \right\rangle }^{2}} \right)  \quad \text{(by \eqref{70})}.
\end{aligned}
\end{equation}
Combining \eqref{3.4.1} and \eqref{3.4.2} yields the desired result \eqref{4.3.3}.
\end{proof}
\vskip 0.3 true cm
By \eqref{54} and in a manner similar to the proof of Theorem \ref{53}, we have the following additive reverse:
\begin{theorem}\label{56}
Let $f:I\to \mathbb{R}$ be strongly convex with modulus $c$ and differentiable on $\overset{o}{\mathop{I}}\,$ whose derivative $f'$ is continuous on $\overset{o}{\mathop{I}}\,$. If $A$ is a self-adjoint operator on the Hilbert space $\mathcal{H}$ with $Sp\left( A \right)\subset \overset{o}{\mathop{I}}\,$, then
\[\left\langle {{A}^{2}}x,x \right\rangle -{{\left\langle Ax,x \right\rangle }^{2}}\le \frac{1}{2c}\left( \left\langle f'\left( A \right)Ax,x \right\rangle -\left\langle Ax,x \right\rangle \left\langle f'\left( A \right)x,x \right\rangle \right),\]
for each $x\in \mathcal{H}$, with $\left\| x \right\|=1$.
\end{theorem}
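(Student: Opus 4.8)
The plan is to imitate the proof of Theorem \ref{53}, but to start from the monotonicity characterization \eqref{54} of strong convexity rather than from the quadratic support \eqref{41}. First I would freeze the second variable in \eqref{54} at a fixed scalar $y_0\in\overset{o}{\mathop{I}}\,$, so that \eqref{54} reads $\left( f'(x)-f'(y_0) \right)(x-y_0)\ge 2c(x-y_0)^2$ for all $x\in I$. Equivalently, the single-variable function $g(x):=\left( f'(x)-f'(y_0) \right)(x-y_0)-2c(x-y_0)^2$ is nonnegative throughout $I$, and it is continuous precisely because $f'$ is assumed continuous on $\overset{o}{\mathop{I}}\,$.

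The key step is then to lift this pointwise scalar inequality to the operator level by continuous functional calculus. Since $g\ge 0$ on an interval containing $Sp(A)$ and $g$ is continuous, the spectral theorem yields $g(A)\ge 0$ as a positive operator. Expanding $g(A)$ and using that $f'(A)$ and $A$ commute (both being functions of $A$, so that $f'(A)A$ is the image of $x\mapsto xf'(x)$ under the functional calculus), I would pair $g(A)\ge 0$ with a unit vector $x$ to obtain, writing $a:=\langle Ax,x\rangle$,
\[\langle f'(A)Ax,x\rangle - y_0\langle f'(A)x,x\rangle - f'(y_0)a + f'(y_0)y_0 - 2c\langle A^2x,x\rangle + 4cy_0 a - 2cy_0^2\ge 0.\]

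Finally I would specialize $y_0=a=\langle Ax,x\rangle$, which is legitimate since $\langle Ax,x\rangle$ lies between $\min Sp(A)$ and $\max Sp(A)$ and hence in $\overset{o}{\mathop{I}}\,$. With this choice the terms $-f'(a)a+f'(a)a$ cancel, the quadratic terms collapse to $-2c\langle A^2x,x\rangle+2ca^2$, and the inequality becomes $\langle f'(A)Ax,x\rangle-\langle Ax,x\rangle\langle f'(A)x,x\rangle\ge 2c\left( \langle A^2x,x\rangle-\langle Ax,x\rangle^2 \right)$. Dividing by $2c>0$ gives exactly the asserted estimate. I expect the only genuinely delicate point to be the justification of the functional-calculus step $g\ge 0\Rightarrow g(A)\ge 0$; once one records that the hypothesis that $f'$ is continuous on $\overset{o}{\mathop{I}}\,$ is precisely what makes $g$ a continuous function admitting functional calculus, the rest is the same cancellation trick $y_0\mapsto\langle Ax,x\rangle$ used at the end of the proof of Theorem \ref{53}.
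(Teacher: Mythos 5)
Your proof is correct and is precisely what the paper intends: the paper gives no written proof of Theorem \ref{56}, saying only that it follows from \eqref{54} ``in a manner similar to the proof of Theorem \ref{53}'', which is exactly your route of freezing one variable in \eqref{54}, applying the continuous functional calculus to the resulting nonnegative continuous function, and then substituting $y_0=\left\langle Ax,x \right\rangle$. The algebra and the justification that $\left\langle Ax,x \right\rangle$ lies in $\overset{o}{\mathop{I}}\,$ both check out.
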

\section{\bf Further Generalization}
\vskip 0.4 true cm
By replacing $c{{\left( x-y \right)}^{2}}$ with a non-negative real valued function $F\left( x-y \right)$, we can define {\it $F$-strongly convex functions} as follows:
\begin{equation}\label{59}
f\left( \lambda x+\left( 1-\lambda \right)y \right)\le \lambda f\left( x \right)+\left( 1-\lambda \right)f\left( y \right)-\lambda \left( 1-\lambda \right)F\left( x-y \right),
\end{equation}
for each $\lambda \in \left[ 0,1 \right]$ and $x,y\in I$. (Very recently, this approach has been investigated by Adamek in \cite{53}).

We should note that, if $F$ is {\it $F$-strongly affine} (i.e. ''$=$'' instead of ''$\le $'' in \eqref{59}) then the function $f$ is $F$-strongly convex if and only if $g=f-F$ is convex (see {{\cite[Lemma 4]{53}}}).

From \eqref{59} we infer that
\[f\left( \lambda \left( x-y \right)+y \right)-f\left( y \right)+\lambda \left( 1-\lambda  \right)F\left( x-y \right)\le \lambda \left( f\left( x \right)-f\left( y \right) \right).\]
By dividing both sides by $\lambda $ we obtain
\[\frac{f\left( \lambda \left( x-y \right)+y \right)-f\left( y \right)}{\lambda }+\left( 1-\lambda  \right)F\left( x-y \right)\le f\left( x \right)-f\left( y \right).\]
Notice that if $f$ is differentiable, then by letting $\lambda \to 0$ we find that
\begin{equation}\label{60}
f'\left( y \right)\left( x-y \right)+F\left( x-y \right)+f\left( y \right)\le f\left( x \right),
\end{equation}
for all $x,y\in I$ and $\lambda \in \left[ 0,1 \right]$.
\vskip 0.3 true cm
In a manner similar to the proof of Theorem \ref{53}, if $F$ is a continuous function it follows from \eqref{60} that
\begin{equation}\label{75}
f\left( \left\langle Ax,x \right\rangle \right)\le \left\langle f\left( A \right)x,x \right\rangle -\left\langle F\left( A-\left\langle Ax,x \right\rangle \right)x,x \right\rangle,
\end{equation}
for any self-adjoint operator $A$ and $x\in \mathcal{H}$, with $\left\| x \right\|=1$.

Inequality \eqref{75} in a weaker form was obtained by Kian in {{\cite[Theorem 2.1]{67}}} for superquadratic functions.
\vskip 0.3 true cm

The following theorem is a generalization of \eqref{75}. The idea of the proof, given below for completion, is similar to that in {{\cite[Lemma 2.3]{58}}}.
\begin{theorem}\label{76}
Let $f:I\to \mathbb{R}$ be an $F$-strongly convex and differentiable function on $\overset{o}{\mathop{I}}\,$ and let $F:I\to \left[ 0,\infty  \right)$ be a continuous function. If $A$ is a self-adjoint operator on the Hilbert space $\mathcal{H}$ with $Sp\left( A \right)\subset \overset{o}{\mathop{I}}\,$ and $f\left( 0 \right)\le 0$, then

\[f\left( \left\langle Ax,x \right\rangle  \right)\le \left\langle f\left( A \right)x,x \right\rangle -\left\langle F\left( A-\frac{1}{{{\left\| x \right\|}^{2}}}\left\langle Ax,x \right\rangle  \right)x,x \right\rangle +\left( {{\left\| x \right\|}^{4}}-{{\left\| x \right\|}^{2}} \right)F\left( \frac{1}{{{\left\| x \right\|}^{2}}}\left\langle Ax,x \right\rangle  \right),\]
for each $x\in \mathcal{H}$ and $\left\| x \right\|\le 1$.
\end{theorem}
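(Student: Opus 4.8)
The plan is to combine the gradient-type inequality \eqref{60}, pushed through the continuous functional calculus, with one further use of the defining inequality \eqref{59} of $F$-strong convexity, the latter being needed precisely because $x$ need no longer be a unit vector. Throughout I take $x\neq 0$ (the correction term in the statement is read for nonzero $x$), and I set $\alpha=\|x\|^{2}\in(0,1]$ together with the rescaled scalar $t_{0}=\frac{1}{\|x\|^{2}}\langle Ax,x\rangle$. Since $t_{0}=\langle A(x/\|x\|),x/\|x\|\rangle$ and $Sp(A)\subset\overset{o}{I}$, the spectral theorem places $t_{0}$ in $\overset{o}{I}$, so $f'(t_{0})$ is defined and $F$ may legitimately be evaluated on $A-t_{0}I$.

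First I would freeze $y=t_{0}$ in \eqref{60}, so that for every scalar $t\in I$ one has $f'(t_{0})(t-t_{0})+F(t-t_{0})+f(t_{0})\le f(t)$. As this holds pointwise on $Sp(A)$, functional calculus yields the operator inequality
\[f'(t_{0})(A-t_{0}I)+F(A-t_{0}I)+f(t_{0})I\le f(A).\]
Pairing with the (non-normalized) vector $x$ gives
\[f'(t_{0})\big(\langle Ax,x\rangle-t_{0}\|x\|^{2}\big)+\langle F(A-t_{0}I)x,x\rangle+f(t_{0})\|x\|^{2}\le\langle f(A)x,x\rangle.\]
The decisive point is that the bracket in the gradient term vanishes by the very definition of $t_{0}$, namely $\langle Ax,x\rangle-t_{0}\|x\|^{2}=0$, leaving
\[\|x\|^{2}f(t_{0})\le\langle f(A)x,x\rangle-\langle F(A-t_{0}I)x,x\rangle.\]

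It remains to convert $\|x\|^{2}f(t_{0})$ into $f(\langle Ax,x\rangle)$. Since $\langle Ax,x\rangle=\alpha t_{0}=\alpha t_{0}+(1-\alpha)\cdot 0$, I would apply \eqref{59} with $\lambda=\alpha=\|x\|^{2}$ to the points $t_{0}$ and $0$, obtaining
\[f(\langle Ax,x\rangle)\le\|x\|^{2}f(t_{0})+(1-\|x\|^{2})f(0)-\|x\|^{2}(1-\|x\|^{2})F(t_{0}).\]
Here $\|x\|\le 1$ makes $1-\|x\|^{2}\ge 0$, so the hypothesis $f(0)\le 0$ lets me discard the middle summand; substituting the bound for $\|x\|^{2}f(t_{0})$ from the previous display and writing $-\|x\|^{2}(1-\|x\|^{2})=\|x\|^{4}-\|x\|^{2}$ reproduces the claimed inequality verbatim.

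The only genuinely clever ingredient, rather than an obstacle, is the choice of the rescaled scalar $t_{0}=\frac{1}{\|x\|^{2}}\langle Ax,x\rangle$: it is exactly the value that simultaneously annihilates the gradient term after pairing with a vector of norm $\|x\|\le 1$ and supplies the correct argument for the correction term $(\|x\|^{4}-\|x\|^{2})F(t_{0})$. The remaining hypotheses slot in naturally—continuity of $F$ secures that $F(A-t_{0}I)$ is a bona fide self-adjoint operator, $Sp(A)\subset\overset{o}{I}$ keeps every scalar argument in the interior where $f'$ lives, and $f(0)\le 0$ is precisely what is consumed in discarding the $(1-\|x\|^{2})f(0)$ term. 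When $\|x\|=1$ the correction vanishes and one recovers \eqref{75}.
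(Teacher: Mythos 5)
Your proof is correct and follows essentially the same route as the paper: the paper normalizes $y=x/\|x\|$, applies \eqref{59} to the decomposition $\langle Ax,x\rangle=\|x\|^{2}\langle Ay,y\rangle+(1-\|x\|^{2})\cdot 0$, discards the $f(0)$ term, and then invokes \eqref{75} for the unit vector $y$ — which is exactly your argument with the two steps performed in the opposite order. Your only presentational difference is that you re-derive (the $\|x\|^{2}$-scaled form of) \eqref{75} directly from \eqref{60} by pairing the operator support inequality with the non-normalized $x$, rather than citing \eqref{75} and rescaling; the content is identical.
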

\begin{proof}
Let $y=\frac{x}{\left\| x \right\|}$ so that $\left\| y \right\|=1$. Whence
\[\begin{aligned}
   f\left( \left\langle Ax,x \right\rangle  \right)&=f\left( {{\left\| x \right\|}^{2}}\left\langle Ay,y \right\rangle +\left( 1-{{\left\| x \right\|}^{2}} \right)0 \right) \\
 & \le {{\left\| x \right\|}^{2}}f\left( \left\langle Ay,y \right\rangle  \right)+\left( 1-{{\left\| x \right\|}^{2}} \right)f\left( 0 \right)-{{\left\| x \right\|}^{2}}\left( 1-{{\left\| x \right\|}^{2}} \right)F\left( \left\langle Ay,y \right\rangle -0 \right) \quad \text{(by \eqref{59})}\\
 & \le {{\left\| x \right\|}^{2}}f\left( \left\langle Ay,y \right\rangle  \right)+\left( {{\left\| x \right\|}^{4}}-{{\left\| x \right\|}^{2}} \right)F\left( \left\langle Ay,y \right\rangle  \right) \quad \text{(since $f\left( 0 \right)\le 0$)}\\
 & \le {{\left\| x \right\|}^{2}}\left( \left\langle f\left( A \right)y,y \right\rangle -\left\langle F\left( A-\left\langle Ay,y \right\rangle  \right)y,y \right\rangle  \right)+\left( {{\left\| x \right\|}^{4}}-{{\left\| x \right\|}^{2}} \right)F\left( \left\langle Ay,y \right\rangle  \right) \quad \text{(by \eqref{75})}\\
 & ={{\left\| x \right\|}^{2}}\left( \frac{1}{{{\left\| x \right\|}^{2}}}\left\langle f\left( A \right)x,x \right\rangle -\frac{1}{{{\left\| x \right\|}^{2}}}\left\langle F\left( A-\frac{1}{{{\left\| x \right\|}^{2}}}\left\langle Ax,x \right\rangle  \right)x,x \right\rangle  \right) \\
 &\quad +\left( {{\left\| x \right\|}^{4}}-{{\left\| x \right\|}^{2}} \right)F\left( \frac{1}{{{\left\| x \right\|}^{2}}}\left\langle Ax,x \right\rangle  \right) \\
 & =\left\langle f\left( A \right)x,x \right\rangle -\left\langle F\left( A-\frac{1}{{{\left\| x \right\|}^{2}}}\left\langle Ax,x \right\rangle  \right)x,x \right\rangle +\left( {{\left\| x \right\|}^{4}}-{{\left\| x \right\|}^{2}} \right)F\left( \frac{1}{{{\left\| x \right\|}^{2}}}\left\langle Ax,x \right\rangle  \right).
\end{aligned}\]
This completes the proof.
\end{proof}
\begin{remark}
By taking into account that  $F\left( \cdot \right)$ is a non-negative function, Theorem \ref{76} provides an improvement for {{\cite[Lemma 2.3]{58}}}. More precisely we have
\[\begin{aligned}
   f\left( \left\langle Ax,x \right\rangle  \right)&\le \left\langle f\left( A \right)x,x \right\rangle -\left\langle F\left( A-\frac{1}{{{\left\| x \right\|}^{2}}}\left\langle Ax,x \right\rangle  \right)x,x \right\rangle +\left( {{\left\| x \right\|}^{4}}-{{\left\| x \right\|}^{2}} \right)F\left( \frac{1}{{{\left\| x \right\|}^{2}}}\left\langle Ax,x \right\rangle  \right) \\
 & \le \left\langle f\left( A \right)x,x \right\rangle .
\end{aligned}\]
\end{remark}
\section*{\bf Acknowledgments}
The authors express their gratitude to the anonymous referees for their careful reading and detailed comments which have considerably improved the paper.
\bibliographystyle{alpha}

\vskip 0.6 true cm

\tiny{$^1$Department of Mathematics, Mashhad Branch, Islamic Azad University, Mashhad, Iran.

{\it E-mail address:} hrmoradi@mshdiau.ac.ir
\vskip 0.3 true cm
$^2$Department of Mathematics, Mashhad Branch, Islamic Azad University, Mashhad, Iran.

{\it E-mail address:} erfanian@mshdiau.ac.ir
\vskip 0.3 true cm

$^3$Department of Mathematics, University of Peshawar, Peshawar, Pakistan.

{\it E-mail address:}  adilswati@gmail.com}
\vskip 0.3 true cm
$^4$Department of Mathematics, University of Bielsko-Biala, Ul. Willowa 2, 43-309 Bielsko-Biala, Poland.

{\it E-mail address:} knikodem@ath.bielsko.pl

\end{document}